\pdfoutput=1
\pdfoutput=1
\pdfoutput=1
\pdfoutput=1
\pdfoutput=1
\documentclass[a4paper,12 pt]{article}
\usepackage{calc}
\usepackage[all]{xy}
\usepackage[centertags]{amsmath}
\usepackage{latexsym}
\usepackage{amsfonts}
\usepackage{graphicx}
\usepackage{tikz}
\usepackage{cases}
\usepackage{amssymb}
\usepackage{amsthm}
\usepackage{color}
\usepackage{fancyhdr}
\usepackage[dvips]{epsfig}
\usepackage{newlfont}
\usepackage[latin1]{inputenc}
\usepackage[latin1]{inputenc}
\usepackage[english,french]{babel}
\usepackage{graphicx}
\usepackage{t1enc}
\usepackage[english,french]{babel}
\usepackage{fancybox}
\usepackage{graphicx}
\usepackage{t1enc}
\usepackage[french2]{minitoc}
\usepackage{mathrsfs}
\usepackage{amsfonts}
\usepackage{wasysym}
\usepackage{hyperref}
\allowdisplaybreaks
\usepackage{float}
\usepackage{authblk}
\usepackage{geometry}

\fancypagestyle{plain}{ \fancyhead{}

	\rhead{\textbf{}} \rfoot{\footnotesize{\textsf{\tiny }}}
	\cfoot{\footnotesize{\textbf{ \thepage}}}} \hfuzz2pt
\newlength{\defbaselineskip}
\numberwithin{equation}{section} 
\newtheorem{theorem}{Theorem}[section]
\newtheorem{corollary}[theorem]{Corollary}

\newtheorem{lemma}[theorem]{Lemma}
\newtheorem{example}{Example}[section]
\newtheorem{proposition}[theorem]{Proposition}

\newtheorem{definition}[theorem]{Definition}
\newtheorem{remark}{Remark}[section]

\newcommand{\A}{{ \mathcal{A}}}
\newcommand{\Pa}{{ \mathcal{P}}}

 \makeatletter
\newcommand{\thechapterwords}
{ \ifcase \thechapter\or 1\or 2\or 3\or 4\or 5\or
	6\or 7\or 8\or 9\or 10\or 11\fi}
\def\thickhrulefill{\leavevmode \leaders \hrule height 2ex \hfill \kern \z@}
\def\@makechapterhead#1{%
	\vspace*{15\p@}%
	{\parindent \z@ \centering \reset@font
		\thickhrulefill\quad
		\scshape  {\chapnumfont \@chapapp{}}{\chapnumfont \thechapterwords}
		\quad \thickhrulefill
		\par\nobreak
		\vspace*{15\p@}%
		\interlinepenalty\@M
		\hrule
		\vspace*{15\p@}%
		\huge {\bfseries  #1}\par\nobreak
		\par
		\vspace*{15\p@}%
		\hrule
		\vskip 15\p@
	}}
	\def\@makeschapterhead#1{%
		\vspace*{15\p@}%
		{\parindent \z@ \centering \reset@font
			\thickhrulefill
			\par\nobreak
			\vspace*{15\p@}%
			\interlinepenalty\@M
			\hrule
			\vspace*{15\p@}%
			\Huge \bfseries #1\par\nobreak
			\par
			\vspace*{15\p@}%
			\hrule
			\vskip 30\p@
		}}
		
		\DeclareFixedFont{\chapnumfont}{T1}{phv}{b}{n}{20pt}
		\DeclareFixedFont{\chapchapfont}{T1}{phv}{b}{n}{16pt}
		\DeclareFixedFont{\chaptitfont}{T1}{phv}{b}{n}{24.88pt}
		\def\@makechapterhead#1{%
			\vspace*{15\p@}%
			{\parindent \z@ \centering \reset@font
				\thickhrulefill\quad
				\scshape {\chaptitfont\color[rgb]{0.00,0.50,1.00}\@chapapp{}}
				{\chapnumfont \thechapterwords}
				\quad \thickhrulefill
				\par\nobreak
				\vspace*{15\p@}%
				\interlinepenalty\@M
				\hrule
				\vspace*{15\p@}%
				{\Large\bfseries #1}\par\nobreak
				\par
				\vspace*{15\p@}%
				\hrule
				\vskip 30\p@
			}}%
			
\begin{document}
				\title{Diffeomorphism Group Actions on Para-Kähler Structures: Lifts and Induced Dynamics on 3-Webs} 
				\author[1]{Bertuel Tangue Ndawa}
				\author[2]{Ferdinand Ngakeu}
				\author[3]{Nasser Saipele Nansidi}
				\author[4]{Thomas Bouetou Bouetou}
				\affil[1]{
					University of Ngaoundere, Cameroon.}
				\affil[2]{
					University of Douala, Cameroon.}
				\affil[3]{
					University of Maroua, Cameroon.}
				\affil[4]{University of Yaounde, Cameroon}

				\date{ }

				\maketitle
				
				\selectlanguage{english}
\begin{center}
	To Professor Jo\"el Tossa
\end{center}

	\section*{Abstract}
Let $M$ be a smooth manifold equipped with a para-K\"ahler structure. We define
an action of the diffeomorphism group $\operatorname{Diff}(M)$ on the space of
para-K\"ahler structures and prove that this action preserves the associated
affine structures. Although this result is well known, we provide a proof based
on this action, showing in particular that an isometry commutes with the
Levi--Civita connection.
We then construct several natural liftings of this action to the tangent bundle
$TM$, the cotangent bundle $T^*M$, and the Whitney sum
$E=TM\oplus T^*M$. In certain cases, these lifted actions induce natural
dynamical systems on a distinguished class of $3$-webs.\vspace{0.25cm}

				\textbf{Keywords}: Connexion, distribution, isometry, metric, paracomplex, para-Kähler, $3$-web.\vspace{0.25cm}
				
				\textbf{MSC2010}: 53D05, 53D12.
				
				\textbf{Acknowledgment}:




\bigskip

\section{Introduction}
An almost paracomplex (product) structure on a $2n$-dimensional manifold $M$, with $n>0$, is a $(1,1)$-tensor field $F$ satisfying $F^2=\operatorname{Id}$, and such that the eigendistributions $F^\pm$ associated with the eigenvalues $\pm 1$ both have dimension $n$. If $g$ is a neutral metric on $M$ that is anti-invariant under $F$, equivalently, if $F$ is an anti-isometry of $g$ ($g$-compatible), namely
$g(FX,FY)=-g(X,Y)$
for all vector fields $X,Y$ on $M$, then the pair $(g,F)$ is called an almost para-Kähler (para-Hermitian) structure. If, in addition, the Levi-Civita connection $\nabla=\nabla^g$ of $g$ satisfies $\nabla F=0$, then $(g,F)$ is called a para-Kähler structure, and the triple $(M,F,g)$ is called a para-Kähler manifold.

Para-Kähler structures have been extensively studied in the literature. One of the main reasons for this interest is that such structures correspond bijectively to bi-Lagrangian structures, namely triples $(\omega,\mathcal{F}_1,\mathcal{F}_2)$, where $\omega$ is a symplectic form and $\mathcal{F}_1,\mathcal{F}_2$ are two transverse Lagrangian foliations defined by $F^\pm$. The tensors $g$, $\omega$, and $F$ are related by
$g(\cdot,\cdot)=\omega(F\cdot,\cdot).$
Consequently, any two of these tensors uniquely determine the remaining one.
A para-Kähler structure also induces a deformed Lie bracket on vector fields, defined by
$[X,Y]_F=[FX,Y]+[X,FY]-F[X,Y]$
for all vector fields $X,Y\in\mathfrak{X}(M)$. This bracket gives rise to an associated cohomology theory, which is isomorphic to the de Rham cohomology via $F$.

In \cite{TNB2}, the author studies a lift of the action of the symplectic
group to the space of bi-Lagrangian structures, under the assumption that the
underlying manifold is parallelizable. The main result is based on a lifting
procedure on $TM$ and $T^*M$ for bi-Lagrangian manifolds. The action considered
there is induced by the map $(\psi,X)\longmapsto \psi_*X;$
that is, by the natural action of the diffeomorphism group on the space of
vector fields. This construction is further extended in \cite{TNB5} to Whitney
sums $E=TM\oplus T^*M$ in the setting of parallelizable manifolds. Moreover, in \cite{TNB4}, the author introduces several distinct lifts of the same action to $TM$ and $T^*M$.

In the present work, we adopt an approach different from those developed in
the aforementioned papers. More precisely, by studying the natural action of
the diffeomorphism group on the space of para-K\"ahler structures, we obtain
a unified framework that generalizes the previous constructions. In particular,
the results of \cite{TNB2,TNB4,TNB5} may be recovered as special cases of the
present setting. Furthermore, the lifting of para-K\"ahler structures from $M$ to $TM$ and $T^*M$ induces,
in certain cases, dynamics on an associated class of $3$-webs.

Before explaining our results more precisely and proving them, we first recall some definitions, fix the notation, and state the known results that will be needed throughout the paper.

\section{Notation and Preliminaries}\label{sub1}


\subsection{General Conventions}

Throughout this paper, all objects are assumed to be smooth unless otherwise
stated. Let $k\in\mathbb{N}^*$. For simplicity, we write $[k]$ for the set
$[k]=\{1,2,\ldots,k\}.$

We use the Einstein summation convention: whenever an index appears once as an
upper index and once as a lower index in a product, summation over its
corresponding range is understood. For instance,
$$
\lambda^i\xi_i=\sum_{i=1}^n \lambda^i\xi_i,
\quad
X^i\partial_{y^i}
=
X^i\frac{\partial}{\partial y^i}
=
\sum_{i=1}^n X^i\frac{\partial}{\partial y^i}.
$$

For $k\in\mathbb{N}$, the notation $I_k$ denotes the identity matrix of size
$k$, while $\operatorname{Id}_{S}$, or simply $\operatorname{Id}$ when no
confusion can arise, denotes the identity map on a set $S$.

The matrix representation of a linear map $f$ with respect to a given basis
will be denoted by $(f)$.

\subsection{Symplectic manifolds}

Symplectic manifolds have been studied since the eighteenth century. Among the fundamental examples in symplectic geometry, the cotangent bundle $T^*M$ carries a natural symplectic structure induced by
the tautological one-form. In this section, we recall this construction and
state its functoriality under cotangent lifts.

Let $M$ be an $m$-dimensional smooth manifold, and let
${}^*\hspace{-0.1cm}\pi:T^*M\longrightarrow M$ denote the canonical projection. The tautological
one-form, also called the Liouville one-form, is the one-form $\theta$ on
$T^*M$ defined by
$$
\theta_{(x,\alpha_x)}(v)
=
\alpha_x\bigl(T_{(x,\alpha_x)}{}^*\hspace{-0.1cm}\pi(v)\bigr),
\quad
(x,\alpha_x)\in T^*M,\quad
v\in T_{(x,\alpha_x)}T^*M.
$$
Its exterior derivative $
\omega_{\mathrm{can}}:=d\theta$
is called the canonical symplectic form on $T^*M$, or equivalently the
Liouville two-form.

 Let $(U,x^1,\dots,x^m)$ be a local coordinate chart on the manifold $M$, and let
$(T^*U,x^1,\dots,x^m,\xi_1,\dots,\xi_m)$ be the induced cotangent coordinate
chart on $T^*M$. Then the tautological one-form and the canonical symplectic
form are locally given by
\begin{equation}\label{theta_et_dtheta_LAP}
	\theta=\sum_{i=1}^{m}\xi_i\,dx^i,
\quad
\omega_{\mathrm{can}}=d\theta=\sum_{i=1}^{m}d\xi_i\wedge dx^i.
\end{equation}

\begin{proposition}\label{prop:cotangent_lift_symplectomorphism}
	Let $\psi:M_1\longrightarrow M_2$ be a diffeomorphism. Its cotangent lift is
	the map
	$$
	\widehat{\psi}:T^*M_1\longrightarrow T^*M_2,
	\quad
	(x,\alpha_x)\longmapsto
	\bigl(\psi(x),(\psi^{-1})^*\alpha_x\bigr).
	$$
	Then $\widehat{\psi}$ is a symplectomorphism from $(T^*M_1,d\theta_1)$ to
	$(T^*M_2,d\theta_2)$, where $d\theta_1$ and $d\theta_2$ denote the canonical
	symplectic forms on $T^*M_1$ and $T^*M_2$, respectively.
	
The cotangent lifts $\widehat{\operatorname{Diff}}(M)$ of diffeomorphisms of the manifold $M$ form a subgroup of
$\operatorname{Diff}(T^*M)$. More precisely, the set
$$
\widehat{\operatorname{Diff}}(M)
:=
\left\{
\widehat{\psi}:T^*M\longrightarrow T^*M
\; ; \;
\psi\in \operatorname{Diff}(M)
\right\}
$$
is a subgroup of $\operatorname{Diff}(T^*M)$.
\end{proposition}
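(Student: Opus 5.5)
The plan is to prove the stronger fact that $\widehat{\psi}$ preserves the tautological one-form, $\widehat{\psi}^*\theta_2=\theta_1$. The symplectic statement then follows at once, since pullback commutes with exterior derivative:
$$\widehat{\psi}^*d\theta_2=d\,\widehat{\psi}^*\theta_2=d\theta_1.$$
First I check that $\widehat{\psi}$ is a diffeomorphism. It is smooth: in local charts it reads $(x,\xi)\mapsto(\psi(x),\xi\circ (T_x\psi)^{-1})$, which is smooth in $(x,\xi)$. Its inverse is $\widehat{\psi^{-1}}$, because $(\psi^{-1})^*$ and $\psi^*$ are mutually inverse on covectors.

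The key identity is ${}^*\pi_2\circ\widehat{\psi}=\psi\circ{}^*\pi_1$, which holds by definition. Differentiating it at $(x,\alpha_x)$ gives $T{}^*\pi_2\circ T\widehat{\psi}=T\psi\circ T{}^*\pi_1$. For $v\in T_{(x,\alpha_x)}T^*M_1$ we then compute
$$(\widehat{\psi}^*\theta_2)(v)=\bigl((\psi^{-1})^*\alpha_x\bigr)\bigl(T\psi\, T{}^*\pi_1 (v)\bigr)=\alpha_x\bigl(T\psi^{-1}T\psi\,T{}^*\pi_1(v)\bigr)=\theta_1(v).$$
This is the only substantive step, and it is straightforward. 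The main care needed is with the convention: $(\psi^{-1})^*\alpha_x$ is understood as the covector $\alpha_x\circ (T_x\psi)^{-1}$ at the point $\psi(x)$. As a cross-check, the same conclusion can be read off the local formula \eqref{theta_et_dtheta_LAP}. Writing $y=\psi(x)$ and $\eta_j=\xi_i\,\partial x^i/\partial y^j$, we get $\eta_j\,dy^j=\xi_i\,dx^i$.

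For the subgroup statement, I show that $\psi\mapsto\widehat{\psi}$ is a group homomorphism $\operatorname{Diff}(M)\to\operatorname{Diff}(T^*M)$; its image is then a subgroup. Functoriality gives
$$\widehat{\psi\circ\phi}(x,\alpha_x)=\bigl(\psi\phi(x),((\psi\phi)^{-1})^*\alpha_x\bigr)=\bigl(\psi\phi(x),(\psi^{-1})^*(\phi^{-1})^*\alpha_x\bigr)=\widehat{\psi}\circ\widehat{\phi}(x,\alpha_x).$$
In addition, $\widehat{\operatorname{Id}}=\operatorname{Id}$, and hence $\widehat{\psi}^{-1}=\widehat{\psi^{-1}}$. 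Closure under composition and inverses follows, and the set is nonempty since it contains the identity. Together with the first part, this also shows that $\widehat{\operatorname{Diff}}(M)$ lies inside the symplectomorphism group of $(T^*M,d\theta)$.
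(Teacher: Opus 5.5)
Your proof is correct. The paper states this proposition without proof as a standard preliminary, and its only justification (in Example~\ref{LAP_Example}: the cotangent lift preserves $\theta$, hence $d\theta$) is exactly your route. That route is to show $\widehat{\psi}^*\theta_2=\theta_1$ from ${}^*\pi_2\circ\widehat{\psi}=\psi\circ{}^*\pi_1$, pass to $d\theta$, and use the homomorphism property $\widehat{\psi\circ\phi}=\widehat{\psi}\circ\widehat{\phi}$ for the subgroup claim.
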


\subsection{Linear Connections}
We denote by $\operatorname{Conn}(M)$ the set of all linear connections on $M$.

Let $\nabla\in \operatorname{Conn}(M)$. The torsion tensor of $\nabla$, denoted by $T_{\nabla}$, or simply by $T$ when there is no ambiguity, is defined by
\begin{equation*}
	T_{\nabla}(X,Y)
	=
	\nabla_XY-\nabla_YX-[X,Y],
	\quad X,Y\in\mathfrak{X}(M).
\end{equation*}

The curvature tensor of $\nabla$, denoted by $R_{\nabla}$, or simply by $R$ when there is no ambiguity, is defined by
\begin{equation*}
	R_{\nabla}(X,Y)Z
	=
	\nabla_X\nabla_YZ
	-
	\nabla_Y\nabla_XZ
	-
	\nabla_{[X,Y]}Z,
	\quad
	X,Y,Z\in\mathfrak{X}(M).
\end{equation*}

We say that a linear connection $\nabla$:
\begin{enumerate}
	\item preserves a symplectic form $\omega$ if $\nabla\omega=0$; that is,
	\begin{equation*}
		X\bigl(\omega(Y,Z)\bigr)
		=
		\omega(\nabla_XY,Z)
		+
		\omega(Y,\nabla_XZ),
		\quad X,Y,Z\in\mathfrak{X}(M);
	\end{equation*}
	
	\item preserves a distribution $\mathcal{D}$ if
	$\nabla\Gamma(\mathcal{D})\subseteq \Gamma(\mathcal{D})$, meaning that
	$\nabla_XY\in\Gamma(\mathcal{D})$
	for all $X\in\mathfrak{X}(M)$ and all $Y\in\Gamma(\mathcal{D})$.
\end{enumerate}


\subsection{Para-K\"ahler Structures}\label{PKsubsec1}
Throughout this work, $M$ denotes a smooth manifold of dimension $2n$, and $\operatorname{Met}_n=\operatorname{Met}_n(M)$ denotes the set of pseudo-Riemannian metrics on $M$ of neutral signature $(n,n)$. Let $g \in \operatorname{Met}_n$. We denote by $\operatorname{Iso}_g$ the
isometry group of $g$, namely $\operatorname{Iso}_g
=
\left\{
\psi \in \operatorname{Diff}(M) \;:\; \psi^*g=g
\right\}.$

An almost product structure on $M$ is a tensor field
$F\in \Gamma(\operatorname{End}(TM))$ satisfying $F^2=\operatorname{Id}$ and $F\neq \operatorname{Id}$.
The eigenbundles associated with the eigenvalues $+1$ and $-1$ are denoted by
$$
F^+:=\ker(F-\operatorname{Id}),
\quad
F^-:=\ker(F+\operatorname{Id}).
$$

If
\begin{equation*}
	\operatorname{rank}F^{+}=\operatorname{rank}F^{-}=n,
\end{equation*}
then $F$ is called an almost paracomplex (par-Hermitian) structure. In some cases, we will identify $F$ with the pair $(F^{-},F^{+})$. Moreover, viewing $F$ as a $C^\infty$-linear map from $\mathfrak{X}(M)$ to itself, it follows that $\mathfrak{X}(M)=F^{-}\oplus F^{+}$.

We denote by $\mathcal{P}c=\mathcal{P}c(M)$ the set of almost paracomplex structures on $M$:
$$
\mathcal{P}c(M)
=
\left\{
F\in \Gamma(\operatorname{End}(TM))
\;\middle|\;
F^2=\operatorname{Id},
\quad
\operatorname{rank}F^+=\operatorname{rank}F^-=n
\right\}.
$$

If, in addition, $F$ is compatible with the metric $g$ in the sense that
$$
g(FX,FY)=-g(X,Y),
\quad
\forall X,Y\in \mathfrak{X}(M),
$$
then $F$ is said to be $g$-compatible. Equivalently, $F$ is an anti-isometry
of $g$.

In local coordinates, this condition is expressed by
$$
(F)^t(g)(F)=-(g).
$$

We denote by $\mathcal{P}c_g=\mathcal{P}c(M,g)$ the set of $g$-compatible almost paracomplex structures:

\begin{equation*}
	\mathcal{P}c_g
=
\left\{
F\in \mathcal{P}c(M)
\;\middle|\;g(FX,FY)=-g(X,Y)
	\quad \forall X,Y\in \mathfrak{X}(M)
\right\}.
\end{equation*}

For each $F\in \mathcal{P}c_g$, the associated fundamental $2$-form
$\omega$ is defined by
$$
\omega(X,Y):=g(FX,Y),
\quad
X,Y\in \mathfrak{X}(M).
$$
Equivalently, since $F^2=\operatorname{Id}$, one obtains
$$
g(X,Y)=\omega(FX,Y),
\quad
X,Y\in \mathfrak{X}(M).
$$
Thus, any two of the tensors $g$, $F$, and $\omega$ determine the third. In
this sense, these objects may be regarded as mutually associated, and we write
formally
$$
g=(\omega,F),
\quad
F=(g,\omega),
\quad
\omega=(g,F).
$$

Locally, the associated matrices satisfy
$$
(\omega)=(F)^t(g),
\quad
(g)=(F)^t(\omega).
$$

Since both $g$ and $\omega$ are non-degenerate, each tensor
$\eta\in\{g,\omega\}$ induces a vector bundle isomorphism $
\eta^\flat:TM\longrightarrow T^*M,
\quad
(x,X_x)\longmapsto \bigl(x,\eta_x(X_x,\cdot)\bigr).
$
We denote the inverse isomorphism by
$\eta^\sharp:T^*M\longrightarrow TM.$

We denote by $\nabla^g$, or simply by $\nabla$ when no confusion can arise,
the Levi-Civita connection associated with the pseudo-Riemannian metric $g$.
It is the unique torsion-free linear connection which is compatible with $g$;
that is, $\nabla g=0.$
In local coordinates $(x^1,\dots,x^{2n})$, its Christoffel symbols are given by
\begin{equation*}
	\Gamma_{ij}^{k}
	=
	\frac{1}{2}g^{k\ell}
	\left(
	\frac{\partial g_{j\ell}}{\partial x^{i}}
	+
	\frac{\partial g_{i\ell}}{\partial x^{j}}
	-
	\frac{\partial g_{ij}}{\partial x^{\ell}}
	\right),
\end{equation*}
where $(g^{k\ell})$ denotes the inverse matrix of $(g_{k\ell})$.

Since $\omega$ is nondegenerate, it is symplectic if and only if $d\omega=0$.

 The almost para-Hermitian structure $(g,F)$ is called a para-K\"ahler structure if
$$
\nabla^g F=0.
$$
Equivalently, $\nabla^g\omega=0$. Moreover, this condition is equivalent to the simultaneous vanishing of $d\omega$ and of the Nijenhuis tensor $N_F$ of $F$, where
$$
N_F(X,Y)=[FX,FY]-F[FX,Y]-F[X,FY]+F^2[X,Y],
\quad X,Y\in\mathfrak{X}(M).
$$
In this case, the triple $(M,g,F)$ is called a para-K\"{a}hler manifold.

We denote by
\begin{equation*}
\mathcal{P}k=	\mathcal{P}k (M)
	:=
	\{(g,F)\;:\; g\in \operatorname{Met}_n(M),\ F\in \mathcal{P}c_g(M),\ \nabla F=0\}
\end{equation*}
the set of para-K\"ahler structures on $M$.

A para-K\"ahler structure $(g,F)\in \Pa k(M)$ is said to be affine if the Levi-Civita connection $\nabla^g$ is flat. Such a structure is characterized by the following result, which follows directly from \cite[Theorem 2]{Hess}.

\begin{remark}\label{LAP_Aff-Struc_Rem}
A para-K\"ahler structure $(g,F)$ on $M$ is said to be affine if and only if,
for every point $x\in M$, there exists a local coordinate system $(p^1,\ldots,p^n,q^1,\ldots,q^n)$ defined on a neighbourhood of $x$ such that
$$
F^+=\left\langle\frac{\partial}{\partial p^1},\ldots,\frac{\partial}{\partial p^n}
\right\rangle_{C^\infty(M)},\quad
F^-=\left\langle\frac{\partial}{\partial q^1},\ldots,\frac{\partial}{\partial q^n}
\right\rangle_{C^\infty(M)}.
$$
Moreover, in these coordinates, the symplectic form $\omega$ and the metric $g$ are given respectively by
$$\omega=\sum_{i=1}^ndq^i\wedge dp^i,\quad
g
=
-\sum_{i=1}^n
\left(
dp^i\otimes dq^i+dq^i\otimes dp^i
\right),
$$
and all Christoffel coefficients of the Levi-Civita connection $\nabla^g$
vanish.

Equivalently, in such a coordinate chart, the matrices of the fundamental
$2$-form $\omega$, the paracomplex structure $F$, and the metric tensor $g$
are respectively given by
\begin{equation}\label{LAP_w-g-F_Cano}
	(\omega)
	=
	\begin{pmatrix}
		0&-I_n\\
		I_n&0
	\end{pmatrix},
	\quad
	(F)
	=
	\begin{pmatrix}
		I_n&0\\
		0&-I_n
	\end{pmatrix},
	\quad
	(g)
	=
	\begin{pmatrix}
		0&-I_n\\
		-I_n&0
	\end{pmatrix}.
\end{equation}
\end{remark}



\subsection{Prolongations of geometric objects to the tangent bundle}

We recall in this subsection some basic facts concerning complete, vertical, and
horizontal lifts to the tangent bundle. For further details on complete and
vertical lifts, we refer the reader to \cite{YKI,YKII}, and for horizontal
lifts to \cite{Dom}. If $\zeta$ is a geometric object on a manifold $M$, we
denote by $\zeta^v$, $\zeta^c$, and $\zeta^h$ its vertical, complete, and
horizontal lifts to $TM$, respectively.

Let $\pi:TM\to M$ be the natural projection. For
$f\in C^\infty(M)$, its vertical, horizontal, and complete lifts are defined by
$$
f^v=f\circ\pi= f^h,
\quad
f^c=df,
$$
where $df$ is viewed as a fiberwise linear function on $TM$.

For $X\in\mathfrak{X}(M)$, write locally
$$
X=X^i\frac{\partial}{\partial x^i},
\quad
\nabla_{\frac{\partial}{\partial x^i}}
\frac{\partial}{\partial x^j}
=
\Gamma^k_{ij}
\frac{\partial}{\partial x^k}.
$$
With respect to the induced coordinates
$(x^1,\ldots,x^n,y^1,\ldots,y^n)$ on $TM$, one has
\begin{equation}
	\begin{aligned}
		X^v
		&=
		X^i\frac{\partial}{\partial y^i},\\
		X^h
		&=
		X^i\frac{\partial}{\partial x^i}
		-
		X^i\Gamma^k_{ij}y^j
		\frac{\partial}{\partial y^k},\\
		X^c
		&=
		X^i\frac{\partial}{\partial x^i}
		+
		\frac{\partial X^i}{\partial x^j}y^j
		\frac{\partial}{\partial y^i}.
	\end{aligned}
	\label{eqlift5}
\end{equation}

Moreover, for all $X,Y\in\mathfrak{X}(M)$, one has
\begin{equation}
	\begin{aligned}[l]
		&[X^c,Y^c]=[X,Y]^c,\quad[X^v,Y^v]=0,\quad[X^c,Y^v]=[X,Y]^v,\\
		&[X^h,Y^v]=(\nabla_XY)^v,\hspace{0.5cm}[X^h,Y^h] =[X,Y]^h-\bigl(R(X,Y)y\bigr)^v,\\
		&[X^c,Y^h]
		=
		[X,Y]^h
		-
		\left((\mathcal L_X\nabla)(Y,y)\right)^v,
	\end{aligned}
	\label{eqlift4}
\end{equation}
where $R$ is the curvature tensor of $\nabla$, $\mathcal L_X\nabla$ denotes the
Lie derivative of the connection $\nabla$ with respect to $X$, and
$y\in T_xM$ denotes the tangent vector corresponding to a point $(x,y)\in TM$.

\begin{remark}\label{DPDrem1}
	Let $\mathcal{D}$ be a distribution on $M$. For $a,b\in\{c,h,v\}$,  
	 $d\in\{c,h\}$, and , $\alpha,\beta\in\mathbb{R}\setminus\{0\}$
	we define the distributions $\mathcal{D}^{ab}$ and $\mathcal{D}^{v(d)}_{\alpha,\beta}$
 on $TM$ by
	\begin{align}
		\Gamma(\mathcal{D}^{ab})
		&=
		\left\langle X^a,\;X^b;\;X\in\Gamma(\mathcal{D})\right\rangle_
		{C^\infty(TM)}.
		\label{lift of foliation ab}\vspace{0.25cm}\\
		\Gamma\left(\mathcal{D}^{v(d)}_{\alpha,\beta}\right) &=
		\left\langle X^{v(d)}:=\alpha X^v+\beta X^d;\;X\in\Gamma(\mathcal{D})\right\rangle_
		{C^\infty(TM)}\label{lift of foliation v(d)}
.
	\end{align}
	If $\mathcal{D}^a$ and $\mathcal{D}^b$ are transverse, then their
	intersection is trivial, and hence
	\begin{equation} 
		\Gamma\bigl(\mathcal{D}^{ab}\bigr) 
		= 
		\Gamma\bigl(\mathcal{D}^a\bigr) 
		\oplus
		\Gamma\bigl(\mathcal{D}^b\bigr). 
		\label{eqlift7}
	\end{equation}
	
	Assume now that $\mathcal{D}$ is involutive. Then, by the bracket
	relations \eqref{eqlift4}, the distributions $\mathcal{D}^c$,
	$\mathcal{D}^v$, and  $\mathcal{D}^{cv}$ are involutive. 
	
	For fixed constants $\alpha,\beta$ satisfying $\alpha\beta\neq0$, the
	distribution $\mathcal{D}^{v(c)}_{\alpha,\beta}$ is involutive when
	$\mathcal{D}$ is the trivial distribution.
	
	The involutivity of distributions involving horizontal lifts depends, in
	general, on the connection $\nabla$. For example, $\mathcal{D}^{hv}$ is
	involutive if and only if
	$$
	\nabla_XY\in\Gamma(\mathcal{D})
	\quad\text{and}\quad
	R(X,Y)y\in\mathcal{D}
	$$
	for all $X,Y\in\Gamma(\mathcal{D})$ and all $y\in TM$, where the second
	condition is understood fiberwise, namely $R(X,Y)y\in\mathcal{D}_{\pi(y)}$.
	
	Similarly, the involutivity of $\mathcal{D}^{ch}$ imposes additional
	conditions involving both the curvature of $\nabla$ and the tensor
	$\mathcal L_X\nabla$. In particular, if $\nabla$ is flat along
	$\mathcal{D}$ and the vector fields in $\Gamma(\mathcal{D})$ are affine
	with respect to $\nabla$; that is,
	$$
	\mathcal L_X\nabla=0,
	\quad X\in\Gamma(\mathcal{D}),
	$$
	then the bracket relations simplify accordingly.
\end{remark}

\subsection{Conormal prolongations of distributions to the cotangent bundle}
Let $\mathcal{D}$ be a distribution on an $m$-dimensional manifold $M$. The
conormal bundle associated with $\mathcal{D}$ is defined as the annihilator of
$\mathcal{D}$ in $T^*M$, namely
\begin{equation*}
	\begin{aligned}
		N^*\mathcal{D}&=
		\{(x,\alpha_x)\in T^*M:\ \alpha_x(v)=0,\ \forall v\in \mathcal{D}_x,\ x\in M\}\\ &=
		\bigcup_{x\in M}\{x\}\times N_x^*\mathcal{D},
	\end{aligned}
\end{equation*}
where
\begin{equation*}
	N_x^*\mathcal{D}
	=
	\{\alpha_x\in T_x^*M:\ \alpha_x(v)=0,\ \forall v\in \mathcal{D}_x\}.
\end{equation*}
Equivalently, its space of smooth sections is given by
\begin{equation*}
	\Gamma(N^*\mathcal{D})
	=
	\{\alpha\in\Omega^1(M):\ \alpha(X)=0,\ \forall X\in\Gamma(\mathcal{D})\}.
\end{equation*}

\begin{remark}
	If $\mathcal{D}$ has constant rank $r$, then $N^*\mathcal{D}$ is a vector
	subbundle of $T^*M$ of rank $
		\operatorname{rank}(N^*\mathcal{D})=m-r.$		
	We define the associated subbundle
	$
	N^{*t}\mathcal{D}:=\mathcal{D}\oplus N^*\mathcal{D},
	$
	whose space of smooth sections is
	$
	\Gamma(N^{*t}\mathcal{D}^N)
	=
	\Gamma(\mathcal{D})\oplus \Gamma(N^*\mathcal{D}).
	$
\end{remark}



\section{Statements and proofs of results}
\subsection{Action of diffeomorphism group on the set of paracomplex structure}
We begin this section with the following observation.
\begin{remark}\label{act}
	Let 
	$\psi,\varphi:M\longrightarrow M$ be two diffeomorphisms. Recall that
	$(\psi\circ\varphi)_*=\psi_*\circ\varphi_*$.
	
	For every connection $\nabla\in \operatorname{Conn}(M)$ and every diffeomorphism
	$\psi$ of $M$,  $\nabla^\psi$ defined  by 
	$$	(\nabla^\psi)_X Y
	=
	\psi_*\left(
	\nabla_{\psi_*^{-1}X}\,\psi_*^{-1}Y
	\right),
	\quad X,Y\in\mathfrak{X}(M)$$  is a connection. 	
	Moreover, for all diffeomorphisms $\psi,\varphi:M\to M$, one has
	$\nabla^{\psi\circ\varphi}=(\nabla^\varphi)^\psi.$
\end{remark}

The first result concerns the push-forward of para-Kahler structures. The precise statement is as follows:

\begin{theorem}\label{LAPactionTheo}
	Let $(M,g,F)$ be a para-K\"ahler manifold. Define the map
	\begin{equation}\label{LAPactMap}
		\mathcal{A}: \operatorname{Diff}(M)\times\mathcal{P}k \longrightarrow \mathcal{P}k,\quad
		(\psi,(g,F))\longmapsto \bigl((\psi^*)^{-1}g,\; F^\psi := \psi_* \circ F \circ \psi_*^{-1}\bigr).
	\end{equation}
	Then $\mathcal{A}$ is a left action. Moreover,  $\nabla^{(\psi^*)^{-1}g}$ is the pushed-forward connection $\nabla^{g}$. As a consequence, the restriction of $\mathcal{A}$ to affine par-Kahler is well defined.
\end{theorem}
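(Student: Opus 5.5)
The plan is to verify, in order: (i) that $\mathcal{A}(\psi,(g,F))$ lies in $\mathcal{P}k$; (ii) the action axioms; (iii) the identity $\nabla^{(\psi^*)^{-1}g}=(\nabla^g)^\psi$, where the right-hand side is the pushed-forward connection of Remark \ref{act}; (iv) the consequence for affine structures. It is convenient to prove (iii) first, because the condition $\nabla F^\psi=0$ needed in (i) follows from it.

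Write $\tilde g:=(\psi^*)^{-1}g=(\psi^{-1})^*g$. Then
\[
\tilde g(X,Y)=g\bigl(\psi_*^{-1}X,\psi_*^{-1}Y\bigr)\circ\psi^{-1},
\]
so $\tilde g$ is pointwise isometric to $g$ via $T\psi$, and hence is a neutral metric. For (iii), I rely on the uniqueness of the Levi-Civita connection, so it suffices to check that $\nabla^\psi:=(\nabla^g)^\psi$ is torsion free and $\tilde g$-compatible.

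\textbf{Torsion.} Using $\psi_*^{-1}[X,Y]=[\psi_*^{-1}X,\psi_*^{-1}Y]$, we get
\[
T_{\nabla^\psi}(X,Y)=\psi_*\bigl(T_{\nabla}(\psi_*^{-1}X,\psi_*^{-1}Y)\bigr)=0 .
\]

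\textbf{Metric compatibility.} Set $X'=\psi_*^{-1}X$ and similarly for $Y',Z'$. From $(\psi_*^{-1}X)(f\circ\psi)=(Xf)\circ\psi$ we obtain
\[
X\bigl(\tilde g(Y,Z)\bigr)=\bigl(X'(g(Y',Z'))\bigr)\circ\psi^{-1}.
\]
Expanding with $\nabla g=0$ and pushing forward then gives $\tilde g(\nabla^\psi_XY,Z)+\tilde g(Y,\nabla^\psi_XZ)$. This is the step I expect to need the most care: one must keep track of the composition with $\psi^{-1}$ on the function level, and use the naturality $\psi_*(fY')=(f\circ\psi^{-1})\psi_*Y'$ when checking that $\nabla^\psi$ is indeed $C^\infty$-linear in $X$ and Leibniz in $Y$ (Remark \ref{act} already grants that it is a connection).

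\textbf{Membership in $\mathcal{P}k$.} $(F^\psi)^2=\psi_*F^2\psi_*^{-1}=\operatorname{Id}$, and the eigenbundles satisfy $(F^\psi)^\pm=T\psi(F^\pm)$, so they have rank $n$. Compatibility follows from
\[
\tilde g(F^\psi X,F^\psi Y)=g\bigl(FX',FY'\bigr)\circ\psi^{-1}=-\tilde g(X,Y).
\]
Finally, by (iii),
\[
\nabla^{\tilde g}_X(F^\psi Y)=\psi_*\nabla_{X'}(FY')=\psi_*F\nabla_{X'}Y'=F^\psi\nabla^{\tilde g}_XY,
\]
so $\nabla^{\tilde g}F^\psi=0$.

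\textbf{Action axioms.} $\mathcal{A}(\operatorname{Id},\cdot)=\operatorname{Id}$ is clear. For composition, $(\psi\circ\varphi)_*=\psi_*\circ\varphi_*$ gives $F^{\psi\circ\varphi}=(F^\varphi)^\psi$, and
\[
((\psi\circ\varphi)^{-1})^*=(\psi^{-1})^*(\varphi^{-1})^*
\]
gives the corresponding identity for the metric. Hence $\mathcal{A}(\psi\circ\varphi,\cdot)=\mathcal{A}(\psi,\mathcal{A}(\varphi,\cdot))$, so $\mathcal{A}$ is a left action.

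\textbf{Affine case.} By (iii),
\[
R_{\nabla^{\tilde g}}(X,Y)Z=\psi_*\bigl(R_{\nabla}(X',Y')Z'\bigr),
\]
again by naturality of brackets. So flatness of $\nabla^g$ implies flatness of $\nabla^{\tilde g}$, and $\mathcal{A}$ restricts to affine para-Kähler structures. Equivalently, composing the adapted charts $(p,q)$ of Remark \ref{LAP_Aff-Struc_Rem} with $\psi^{-1}$ produces adapted charts for $(\tilde g,F^\psi)$.
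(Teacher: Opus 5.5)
Your proof is correct and follows essentially the same route as the paper: naturality of torsion, metric compatibility and curvature under push-forward, uniqueness of the Levi-Civita connection, the push-forward of the eigenbundles of $F$, and the action axioms obtained from $(\psi\circ\varphi)_*=\psi_*\circ\varphi_*$. The only difference is the order of the steps: you first establish $\nabla^{\tilde g}=(\nabla^g)^\psi$ and then deduce $\nabla^{\tilde g}F^\psi=0$ from it, whereas the paper computes $\nabla^\psi F^\psi=\psi_*(\nabla F)\circ\psi_*^{-1}$ directly before invoking uniqueness.
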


\begin{proof}
	\begin{lemma}\label{LAPpushforparLem}	Let $(M,g,F)$ be a para-K\"ahler manifold. Let $N$ be a manifold diffeomorphic to
		$M$. Then, for any diffeomorphism $\psi:M\longrightarrow N$, the 	
		pair
		$\bigl((\psi^*)^{-1}g,\; F^\psi \bigr)$
		defines a para-K\"ahler structure on $N$. 		
		Moreover,   one has $\nabla=\nabla^g$ if and only if  $\nabla^{(\psi^*)^{-1}g} =\nabla^\psi$.
		Furthermore, the structure
		$(g,F)$ is affine if and only if the structure $\bigl((\psi^*)^{-1}g,\; F^\psi := \psi_* \circ F \circ \psi_*^{-1}\bigr)$
		is affine.
	\end{lemma}
	\begin{proof}
Set $g_N := (\psi^{-1})^*g$. Since $\psi^{-1}:N\to M$ is a diffeomorphism and $g$ is a neutral metric, the form $g_N$ is  a neutral metric as well.

We now show that $F^\psi$ is an almost paracomplex structure.

\begin{equation*}
	\begin{aligned}
		F^\psi \circ F^\psi
		&= \psi_* \circ F \circ \psi_*^{-1} \circ \psi_* \circ F \circ \psi_*^{-1} \\
		&= \psi_* \circ F^2 \circ \psi_*^{-1} \\
		&= \mathrm{Id}.
	\end{aligned}
\end{equation*}

Moreover, since $\psi$ is a diffeomorphism, there exist unique distributions $(F_N^{-},F_N^{+})$ such that
\begin{equation*}
	\psi_*(F^-)=F_N^-,
	\quad
	\psi_*(F^+)=F_N^+,
	\quad
	\operatorname{rank} F_N^-=\operatorname{rank} F_N^+=n.
\end{equation*}
Observe that
\begin{equation*}
	F^\psi\big|_{F_N^{+}}=\mathrm{Id},
	\quad
	F^\psi\big|_{F_N^{-}}=-\mathrm{Id}.
\end{equation*}
Hence, $F^\psi$ is a paracomplex structure.

Furthermore, for every $X,Y\in\mathfrak{X}(N)$, we compute
\begin{align*}
	(\psi^{-1})^*g\big(F^\psi(X),F^\psi(Y)\big)
	&= g\big(\psi^{-1}_*F^\psi(X),\psi^{-1}_*F^\psi(Y)\big) \\
	&= g\big(F(\psi^{-1}_*X),F(\psi^{-1}_*Y)\big) \\
	&= -\,g\big(\psi^{-1}_*X,\psi^{-1}_*Y\big) \\
	&= -(\psi^{-1})^*g(X,Y).
\end{align*}
In the above, we use that $F$ is $g$-compatible. Therefore, $F^\psi$ is $(\psi^{-1})^*g$-compatible.

\medskip
Action on the Levi--Civita connection.
Let $\nabla^{g}=\nabla$. Recall that the Levi--Civita connection of a pseudo-Riemannian metric on a manifold is the unique torsion-free connection that preserves the metric.

First, for all $X,Y\in\mathfrak{X}(N)$, one has
\begin{equation*}
	T_{\nabla^\psi}(X,Y)
	=
	\psi_*\Bigl(
	T_\nabla(\psi_*^{-1}X,\psi_*^{-1}Y)
	\Bigr).
\end{equation*}
Thus,
\begin{equation}\label{LAPtornul}
	T_\nabla=0
	\Longleftrightarrow
	T_{\nabla^\psi}=0.
\end{equation}

Next, for $X,Y,Z\in\mathfrak{X}(N)$, we obtain
\begin{equation*}
	(\nabla^\psi_X g_N)(Y,Z)
	=
	(\nabla_{\psi_*^{-1}X}g)\big(\psi_*^{-1}Y,\psi_*^{-1}Z\big).
\end{equation*}
Therefore,
\begin{equation}\label{LAPprg}
	\nabla g=0
	\Longleftrightarrow
	\nabla^\psi g_N=0.
\end{equation}

Moreover,
\begin{align*}
	\nabla^\psi F^\psi
	&= \psi_* \circ \nabla_{\psi_*^{-1}}\Bigl(\psi_*^{-1}\circ\bigl(\psi_* \circ F \circ \psi_*^{-1}\bigr)\Bigr) \\
	&= \psi_* \circ \nabla_{\psi_*^{-1}X}\bigl(F(\psi_*^{-1})\bigr) \\
	&= \psi_*\bigl(\nabla_{\psi_*^{-1}X}F\bigr)\circ \psi_*^{-1}.
\end{align*}

Since $\psi$ is a diffeomorphism, this implies that
\begin{equation}\label{LAPprF}
	\nabla F=0
	\Longleftrightarrow
	\nabla^\psi F^\psi=0.
\end{equation}

By combining \eqref{LAPtornul}, \eqref{LAPprg}, and \eqref{LAPprF}, it follows that
\begin{equation}\label{LAPequiconnex}
	\nabla=\nabla^{g}
	\Longleftrightarrow
	\nabla^\psi=\nabla^{(\psi^*)^{-1}g}.
\end{equation}

Finally, the curvature of $\nabla^\psi$ satisfies
\begin{equation*}
	R_{\nabla^\psi}(X,Y)Z
	=
	\psi_*\Bigl(
	R_\nabla(\psi_*^{-1}X,\psi_*^{-1}Y)\psi_*^{-1}Z
	\Bigr).
\end{equation*}
Therefore,  
\begin{equation}\label{LAPCurvNul}
R_\nabla=0\Longleftrightarrow R_{\nabla^\psi}=0.
\end{equation}

Consequently, the para-K\"ahler structure
$(g,F)$ is affine if and only if 
the structure	
$\bigl((\psi^*)^{-1}g,\; F^\psi := \psi_* \circ F \circ \psi_*^{-1}\bigr)$
is affine.
	\end{proof}

Now, we can easily prove Theorem~\ref{LAPactionTheo}.

By Lemma~\ref{LAPpushforparLem}, $\mathcal{A}$ is well defined.
The action properties of $\mathcal{A}$ follow from the push-forward action of $\operatorname{Diff}(M)$ on $\mathfrak{X}(M)$.
Moreover, by \eqref{LAPCurvNul}, the affine condition is preserved under this action.
This proves Theorem~\ref{LAPactionTheo}.
\end{proof}

\begin{remark} Let $(g,F)$ be a para-K\"ahler structure on a manifold $M$.
	If $\psi \in \operatorname{Iso}_g$, then
	$\mathcal{A}(\psi,(g,F)) = (g,F^\psi)$ (see \eqref{LAPactMap}).
	Therefore, by Theorem~\ref{LAPactionTheo} or Lemma~\ref{LAPpushforparLem},
	we conclude that $(\nabla^g)^{\psi} = \nabla^g$.
	Equivalently, $\psi$ commutes with the Levi--Civita connection $\nabla^g$.
	This fact is well known in the literature.
\end{remark}

\subsection{Prolongation of the action $\A$}
This section is devoted to the construction of lifted para-K\"ahler structures
on the tangent bundle $TM$ and the cotangent bundle $T^*M$, and, more
generally, on the Whitney sum $E=TM\oplus T^*M.$
It will then follow that the lifted action introduced in
Theorem~\ref{LAPactionTheo} preserves these structures. This section therefore
constitutes the core of the construction announced in the title of the present
work. Before formulating the main statement precisely, we recall the following
definition.

\begin{definition}
	Let $(g,F)$ be a para-K\"ahler structure on $M$, and let $\omega$ denote its
	associated symplectic form.
	
	On $T^*M$, we define
	$$
	N^{*t}F=\bigl(N^{*t}F^{-},N^{*t}F^{+}\bigr),
	\quad
	N^{*t}g=(d\theta,N^{*t}F),
	\quad
	\widetilde{g}_N=({}^*\hspace{-0.1cm}\pi^*\omega+d\theta,N^{*t}F).
	$$
	
	On $TM$, for each $ab\in\{cv,ch,hv\}$, we define
	$$
	F^{ab}=\bigl((F^{-})^{ab},(F^{+})^{ab}\bigr),
	\quad
	g^{ab}=\bigl((\omega^*)^{-1}d\theta,F^{ab}\bigr).
	$$
\end{definition}

\begin{theorem}\label{LAPLiftTheo}
	Let $(g,F)$ be a para-K\"ahler structure on $M$, and let $\omega$ denote
	the associated symplectic form. Then the following assertions hold:
	\begin{enumerate}
		\item The pair $(N^{*t}g,N^{*t}F)$ defines an affine para-K\"ahler structure
		on $T^*M$.
		
		\item The pair $(\widetilde{g}_N,N^{*t}F)$ defines a para-K\"ahler structure
		on $T^*M$. Moreover, this structure is affine if and only if $(g,F)$ is
		affine.
		
		\item We suppose that  $(g,F)$ is affine.
		\begin{enumerate}
\item For each $ab\in\{cv,hv\}$, the pair
$(g^{ab},F^{ab})$ defines a para-K\"ahler structure on $TM$, where the
horizontal lift is taken with respect to the Levi-Civita connection
$\nabla^g$. 
\item The pairs $\left( g^{a}, F^{a} \right)$, $a\in\{h,v\}$,
		induce  a collection of  3 webs $TM$.
	\end{enumerate}
	\end{enumerate}
\end{theorem}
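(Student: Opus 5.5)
The plan is to reduce everything to the bi-Lagrangian description recalled in the introduction. A pair (symplectic form, two transverse Lagrangian foliations) determines a para-K\"ahler structure via $g(\cdot,\cdot)=\omega(F\cdot,\cdot)$. So for each assertion I will check three things: (i) the symplectic form is closed and nondegenerate; (ii) the two distributions are complementary, of half rank, and Lagrangian; (iii) both distributions are involutive. Involutivity of the eigendistributions of $F$ gives $N_F=0$, and together with $d\omega=0$ this yields $\nabla F=0$ by the characterization in Subsection~\ref{PKsubsec1}. For affineness I will produce adapted coordinates as in Remark~\ref{LAP_Aff-Struc_Rem}, in which the Christoffel symbols vanish.

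\textbf{Assertion 1.} Work in local coordinates $(x^i,\xi_i)$ on $T^*M$, chosen so that, on $M$, $F^-$ and $F^+$ are spanned by coordinate fields. This is possible because $F^\pm$ are integrable, and no affineness is needed here. Say $F^+=\langle\partial_{p^i}\rangle$ and $F^-=\langle\partial_{q^i}\rangle$. Then $N^*F^+=\langle dq^i\rangle$, so $N^{*t}F^+$ is spanned by $\partial_{p^i}$ (horizontal in these coordinates) together with the fibre directions $\partial_{\eta_i}$ dual to $dq^i$. Symmetrically, $N^{*t}F^-$ is spanned by $\partial_{q^i}$ and $\partial_{\zeta_i}$. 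Each family consists of coordinate vector fields, so both distributions are involutive, complementary, and of rank $2n$. From \eqref{theta_et_dtheta_LAP} we have $d\theta=d\zeta_i\wedge dp^i+d\eta_i\wedge dq^i$, and each family is Lagrangian for it. After reordering, these coordinates put $(d\theta,N^{*t}F)$ into the normal form \eqref{LAP_w-g-F_Cano}. Hence the structure is affine.

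\textbf{Assertion 2.} The form ${}^*\pi^*\omega+d\theta$ is closed. It is nondegenerate because ${}^*\pi^*\omega$ only perturbs the base block of the block-triangular matrix of $d\theta$. The same distributions remain involutive. For the Lagrangian property, note that ${}^*\pi^*\omega$ restricted to $N^{*t}F^\pm$ only sees the $F^\pm$ component, on which $\omega$ vanishes. Hence the pair is para-K\"ahler.

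For the affine equivalence, suppose first that $(g,F)$ is affine. Take the coordinates of Remark~\ref{LAP_Aff-Struc_Rem}, so that $\omega=dq^i\wedge dp^i$. The shift $\zeta_i\mapsto\zeta_i+q^i$ (up to sign) absorbs $\omega$ into $d\theta$. This shift preserves the two foliations, so assertion 1 gives flat coordinates. The converse is the main obstacle. I would restrict the Levi-Civita connection of $\widetilde g_N$ to the zero section. The zero section is totally geodesic, being the fixed set of the isometry $\xi\mapsto-\xi$ when $\omega$ is absent; with $\omega$ present this needs checking. Then I would identify the induced curvature with that of $\nabla^g$. Alternatively, I would compute Christoffel symbols of $\widetilde g_N$ in $(p,q,\zeta,\eta)$ coordinates and read off $\Gamma(g)$ from the base block. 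I expect this step to need genuine care.

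\textbf{Assertion 3.} Assume $(g,F)$ is affine and use its flat coordinates. Then $\Gamma^k_{ij}=0$, so $X^h=X^i\partial_{x^i}$. For $ab=cv$, the distributions $(F^\pm)^{cv}$ are involutive by Remark~\ref{DPDrem1}. For $ab=hv$, involutivity holds because $\nabla F^\pm\subset F^\pm$ and $R=0$, using \eqref{eqlift4}. In induced coordinates the lifts are spanned by $\partial_{p},\partial_{y_p}$ and $\partial_q,\partial_{y_q}$ respectively. Taking the form transported from $d\theta$ via $\omega^\flat$ (that is, the complete lift $\omega^c$ up to identification), both distributions are Lagrangian and complementary. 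This gives (a).

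For (b), the three foliations will be $(F^\pm)^h$-type lifts together with the vertical foliation. I would check pairwise transversality in these coordinates: $\langle\partial_p,\partial_q\rangle$ and $\langle\partial_y\rangle$ are each complementary to the others. This yields a $3$-web on $TM$.
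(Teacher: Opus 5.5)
Your treatment of assertion 1, of the para-K\"ahler part of assertion 2, and of 3(a) is correct and is essentially the paper's coordinate computation. Your shift $\zeta_i\mapsto\zeta_i+q^i$ in Darboux adapted coordinates correctly proves the ``if'' half of the affine equivalence. The genuine gap is the converse in assertion 2. You flag it as the main obstacle but do not prove it, and in fact it cannot be proved: with the chart-based $N^{*t}F$ that you and the paper both use, it is false, and both of your proposed routes break down. Write $\omega=\sum_{i,j}A_{ij}\,dq^i\wedge dp^j$ in an adapted chart. The Levi-Civita connection $\widetilde\nabla$ of $\widetilde g_N$ is torsion-free, preserves the coordinate foliations $\langle\partial_p,\partial_\eta\rangle$ and $\langle\partial_q,\partial_\zeta\rangle$, and parallelizes ${}^*\pi^*\omega+d\theta$. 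Hence its only nonzero Christoffel symbols are
\[
\widetilde\nabla_{\partial_{p^k}}\partial_{p^j}=-\sum_i\partial_{p^k}A_{ij}\,\partial_{\eta_i},\qquad
\widetilde\nabla_{\partial_{q^k}}\partial_{q^j}=\sum_i\partial_{q^k}A_{ji}\,\partial_{\zeta_i}.
\]
This has three consequences. First, the base block vanishes identically, so $\Gamma(g)$ (whose $p$-block is the matrix $A^{-1}\partial_{p^k}A$) cannot be read off from it. Second, these symbols are vertical, so the zero section is not totally geodesic as soon as $A$ is non-constant, and the Gauss equation brings in second fundamental form terms. Third, $\widetilde\nabla$ is flat if and only if $\partial_{p^k}\partial_{q^l}A_{ij}=0$, which is not the flatness of $\nabla^g$. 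The paper's one-line appeal to a ``block construction'' of the Christoffel symbols is the same mistaken idea as your second route.

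Concretely, take $M=\mathbb{R}^2$, $F^+=\langle\partial_p\rangle$, $F^-=\langle\partial_q\rangle$, and $\omega=A\,dq\wedge dp$ with $A=1+p^2+q^2$. Then $g=-A(dp\otimes dq+dq\otimes dp)$ is para-K\"ahler but not flat, since $R(\partial_q,\partial_p)\partial_p=\partial_p\partial_q(\log A)\,\partial_p=-4pqA^{-2}\,\partial_p$. Yet
\[
{}^*\pi^*\omega+d\theta=d\Bigl(\zeta+\tfrac{q^3}{3}\Bigr)\wedge dp+d\Bigl(\eta-p-\tfrac{p^3}{3}\Bigr)\wedge dq .
\]
Here $p$ and $\eta-p-p^3/3$ are constant on the leaves of $N^{*t}F^-=\langle\partial_q,\partial_\zeta\rangle$, while $q$ and $\zeta+q^3/3$ are constant on those of $N^{*t}F^+=\langle\partial_p,\partial_\eta\rangle$. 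So these four functions are (up to a sign) Darboux coordinates adapted to both foliations, and $(\widetilde g_N,N^{*t}F)$ is affine. Note also that when $(g,F)$ is not affine, $N^{*t}F$ changes under a non-affine change $(p,q)\mapsto(\phi(p),\chi(q))$ of adapted chart. The fibre coordinates dual to $dp$ then depend on $p$, so $\partial_p$ acquires a component in the other foliation. Thus in that case assertions 1 and 2 only make sense relative to a chosen chart. Finally, in 3(b) you exhibit only two foliations, the horizontal $\langle\partial_p,\partial_q\rangle$ and the vertical $\langle\partial_y\rangle$. A $3$-web needs a third rank-$2n$ foliation transverse to both. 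The paper takes the graph-type lift spanned by $\alpha\partial_{p^i}+\beta\partial_{y^i}$ and $\mu\partial_{q^i}+\nu\partial_{y^{n+i}}$ with $\alpha\beta\mu\nu\neq0$. This distribution is integrable in flat coordinates, and letting the constants vary gives the collection of $3$-webs.
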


\begin{proof}
	Let $(g,F)$ be a para-K\"ahler structure on $M$, and let
$
(U,p^1,\dots,p^n,q^1,\dots,q^n)
$
be a coordinate chart adapted to $(F^-,F^+)$. Thus,
locally, $$
F^+
=
\left\langle
\frac{\partial}{\partial p^1},\dots,
\frac{\partial}{\partial p^n}
\right\rangle,
\quad
F^-
=
\left\langle
\frac{\partial}{\partial q^1},\dots,
\frac{\partial}{\partial q^n}
\right\rangle.
$$

Let $(T^*U,
p^1,\dots,p^n,q^1,\dots,q^n,\xi_1,\dots,\xi_{2n})$ denote the induced cotangent bundle chart on \(T^*U\). With respect to these
coordinates, the lifted distributions admit the following local expressions:
\begin{equation}\label{LAP_F-N}
	\begin{cases}
		N^{*t}F^+
		=
		\left\langle
		\frac{\partial}{\partial p^1},\dots,
		\frac{\partial}{\partial p^n},
		\frac{\partial}{\partial \xi_{n+1}},\dots,
		\frac{\partial}{\partial \xi_{2n}}
		\right\rangle,
		\\[0.25cm]
		N^{*t}F^-
		=
		\left\langle
		\frac{\partial}{\partial q^1},\dots,
		\frac{\partial}{\partial q^n},
		\frac{\partial}{\partial \xi_1},\dots,
		\frac{\partial}{\partial \xi_n}
		\right\rangle,
	\end{cases}
\end{equation}
and 
	\begin{equation}\label{LAP_dtheta}
	d\theta
	=
	\sum_{i=1}^n d\xi_i\wedge dp^i
	+
	\sum_{i=1}^n d\xi_{n+i}\wedge dq^i.
\end{equation}
\paragraph{The pair $(N^{*t}g,N^{*t}F)$ is an affine para-K\"ahler structure on $T^*M$:}
By combining equalities~\eqref{LAP_F-N} and~\eqref{LAP_dtheta} with
Remark~\ref{LAP_Aff-Struc_Rem}, the assertion follows.

\paragraph{The pair $(\widetilde{g}_N,N^{*t}F)$ is a para-K\"ahler structure on
	$T^*M$.}

Observe first that
\begin{equation}\label{LAP_tildeOmega}
	\widetilde{\omega}
	=
	\pi^*\omega+d\theta
	=
	\sum_{1\leq i<j\leq 2n}(\omega_{ij}\circ\pi)\,dx^i\wedge dx^j
	+
	\sum_{i=1}^n d\xi_i\wedge dx^i
	+
	\sum_{i=1}^n d\xi_{n+i}\wedge dx^{n+i},
\end{equation}
where
$$
(x^1,\ldots,x^{2n})
:=
(p^1,\ldots,p^n,q^1,\ldots,q^n),
\quad
(\omega_{ij})=(\omega).
$$

By combining \eqref{LAP_tildeOmega} with \eqref{LAP_F-N}, we obtain
\begin{equation}\label{LAP_Omtilde-F_N}
	(\widetilde{\omega})
	=
	\begin{pmatrix}
		(\omega)&-I_{2n}\\
		I_{2n}&0
	\end{pmatrix},
	\quad
	(N^{*t}F)
	=\begin{pmatrix}
		(F)&0\\
		0&-(F)
	\end{pmatrix}, \quad (F)=\begin{pmatrix}
	I_n&0\\
	0&-I_n
	\end{pmatrix}.
\end{equation}

Since locally $(g)=(F)^t(\omega)$, it follows from
\eqref{LAP_Omtilde-F_N} that the matrix of $N^{*t}\tilde{g}$ is given by
\begin{equation}\label{LAP_g_N}
	(N^{*t}\tilde{g})
	=
	\begin{pmatrix}
		(\omega)_{\mathrm{sym}\text{-}s}&-(F)\\
	-(F)&0
	\end{pmatrix}
	=
	\begin{pmatrix}
		\bigl((F)^t(g)\bigr)_{\mathrm{sym}\text{-}s}&-(F)\\
		-(F)&0
	\end{pmatrix},
\end{equation}
where $(\omega)_{\mathrm{sym}\text{-}s}$ denotes the symmetric $2n\times 2n$
matrix determined by the diagonal entries of $(\omega)$ together with its
entries above the diagonal.

Moreover, from \eqref{LAP_Omtilde-F_N}, one has
$$
\widetilde{\omega}(N^{*t}FX,N^{*t}FY)
=
-\widetilde{\omega}(X,Y),
\quad
X,Y\in\mathfrak{X}(T^*M).
$$
Consequently,
\begin{equation}\label{LAP_Fg-com}
	\widetilde{g}_N(N^{*t}FX,N^{*t}FY)
	=
	-\widetilde{g}_N(X,Y),
	\quad
	X,Y\in\mathfrak{X}(T^*M).
\end{equation}

By \eqref{LAP_F-N}, the endomorphism $N^{*t}F$ is completely integrable. Combining
this fact with \eqref{LAP_Fg-com}, we conclude that
$(N^{*t}\tilde{g},N^{*t}F)$ is a para-K\"ahler structure on $T^*M$.

Finally, by \eqref{LAP_g_N}, the Christoffel coefficients of the Levi-Civita
connection associated with $N^{*t}\tilde{g}$ are obtained from those of $g$ by
the corresponding block construction. Hence $(N^{*t}\tilde{g},N^{*t}F)$ is affine
if and only if $(g,F)$ is affine.

\paragraph{We assume that  $(g,F)$ is an affine  para-K\"ahler structure on $M$.\\}
Let $(p^i,q^i)=(p^1,\ldots,p^n,q^1,\ldots,q^n)$ be an adapted coordinates to $(g,F)$. 
From  Remark~\ref{LAP_Aff-Struc_Rem} and  the second identity in \eqref{theta_et_dtheta_LAP}, we get
\begin{equation}\label{F_for_lift_LAP}
	F^+
	=
	\left\langle
	\frac{\partial}{\partial p^1},\ldots,
	\frac{\partial}{\partial p^n}
	\right\rangle,
	\quad
	F^-
	=
	\left\langle
	\frac{\partial}{\partial q^1},\ldots,
	\frac{\partial}{\partial q^n}
	\right\rangle,
\end{equation}
and
\begin{equation}\label{LAF_dtheta_TM}
	\left(\omega^\flat\right)^*(d\theta)
	=
	\sum_{i=1}^n dy^{n+i}\wedge dp^i
	-
	\sum_{i=1}^n dy^{i}\wedge dq^i
\end{equation}
where $(p^1,\ldots,p^n,q^1,\ldots,q^n,y^1,\ldots,y^{2n})$
are the tangent bundle coordinates on $TM$ induced by the local coordinates $(p^1,\ldots,p^n,q^1,\ldots,q^n)$.

Moreover, by combining  \eqref{eqlift5} and \eqref{F_for_lift_LAP}, we obtain
\begin{equation}\label{LAP_F-et-F-ab}
\begin{array}{cl}
			\left(F^+\right)^v
			=
			\left\langle
			\frac{\partial}{\partial y^1},\ldots,
			\frac{\partial}{\partial y^n}
			\right\rangle,
			&\hspace{0.5cm}
			\left(F^-\right)^v
			=
			\left\langle
			\frac{\partial}{\partial y^{n+1}},\ldots,
			\frac{\partial}{\partial y^{2n}}
			\right\rangle,
			\\[0.5cm]
			\left(F^+\right)^h
			=
			\left\langle
			\frac{\partial}{\partial p^1},\ldots,
			\frac{\partial}{\partial p^n}
			\right\rangle,
			&\hspace{0.5cm}
			\left(F^-\right)^h
			=
			\left\langle
			\frac{\partial}{\partial q^1},\ldots,
			\frac{\partial}{\partial q^n}
			\right\rangle,
			\\[0.5cm]
			\left(F^+\right)^c
			=
			\left\langle
			\frac{\partial}{\partial p^1}
			,
			\ldots,
			\frac{\partial}{\partial p^n}
			
			\right\rangle,
			&\hspace{0.5cm}
			\left(F^-\right)^c
			=
			\left\langle
			\frac{\partial}{\partial q^1}
			,
			\ldots,
			\frac{\partial}{\partial q^n}
			\right\rangle,\\[0.5cm]
		\end{array}
		\\
	\end{equation}

Therefore, by \eqref{lift of foliation v(d)}, for any fixed constants
$\alpha,\beta,\mu,\nu$, we obtain
\begin{equation}\label{LAP_F-et-F-ab-v(h)}
	\begin{array}{l}
		\left(F^+_{\alpha,\beta}\right)^{v(h)}=\left\langle\alpha\frac{\partial}{\partial p^1}+\beta\frac{\partial}{\partial y^1},\ldots,\alpha\frac{\partial}{\partial p^n}+\beta\frac{\partial}{\partial y^n}\right\rangle,
		\\[0.5cm]
		\left(F^-_{\mu,\nu}\right)^{v(h)}=\left\langle \mu\frac{\partial}{\partial q^1}+\nu\frac{\partial}{\partial y^{n+1}},\ldots, \mu\frac{\partial}{\partial q^n}+\nu\frac{\partial}{\partial y^{2n}}\right\rangle .
	\end{array}
\end{equation}

\subparagraph{For each $ab\in\{cv,hv\}$, the pair
$(g^{ab},F^{ab})$ defines a para-K\"ahler structure on $TM$.\\}
Observe that from \eqref{eqlift4}, the tensors
$F^{ab}$, with $ab\in\{cv,ch,hv\}$, are involutive. Hence it remains to prove
that each $F^{ab}$ is compatible with the corresponding metric $g^{ab}$ defined by
\begin{equation}\label{LAP_g-ab}
	\left(g^{ab}\right)
	:=
	\left(F^{ab}\right)^t
	\left(\left(\omega^\flat\right)^*(d\theta)\right).
\end{equation}

Combining \eqref{LAP_g-ab} with \eqref{LAP_F-et-F-ab} and
\eqref{LAF_dtheta_TM}, one obtains
$$
g^{ab}(F^{ab}X,F^{ab}Y)
=
-g^{ab}(X,Y),
\quad
X,Y\in\mathfrak{X}(TM).
$$
Thus, the pair $(g^{ab},F^{ab})$ defines a para-K\"ahler structure on $TM$. 
\subparagraph{The pairs $\left( g^{a}, F^{a} \right)$, $a\in\{h,v\}$,
	induce  a collection of  3 webs $TM$.\\}

Let $\alpha,\beta,\mu,\nu\in\mathbb{R}$, and let
\begin{equation*}
 \left(g^{ab}_{\alpha,\beta,\mu,\nu}\right)
	:=
	\left(F^{ab}_{\alpha,\beta,\mu,\nu}\right)^t
	\left(\left(\omega^\flat\right)^*(d\theta)\right).
\end{equation*}
In a similar manner as previously, by using the fact that $\nabla^g$ is a torsion free connection, the collection $\left\{\left(g^{ab}_{\alpha,\beta,\mu,\nu},F^{ab}_{\alpha,\beta,\mu,\nu}\right) \mid \alpha\beta\mu\nu\neq 0\right\}$ forms a family of para-K\"ahler structures on $TM$.

Now, for  fixed real numbers $\alpha,\beta,\mu,\nu\in\mathbb{R}\setminus\{0\}$, let set 
		\begin{equation*}
\begin{aligned}	
&\Gamma\left(\mathcal{F}_1\right)=(F^-)^{v}\oplus(F^+)^{v},\quad
\Gamma\left(\mathcal{F}_2\right)=(F^-)^{h}\oplus(F^+)^{h},\\
&\Gamma\left({\mathcal{F}}_{\alpha,\beta,\mu,\nu}\right)=(F^-_{\alpha,\beta})^{v(h)}\oplus(F^+_{\mu,\nu})^{v(h)}.
\end{aligned}
\end{equation*}
From \eqref{LAP_F-et-F-ab} and \eqref{LAP_F-et-F-ab-v(h)} the three foliations $\mathcal{F}_i$, $i\in[3]$ are pairwise transverse. 
 Therefore, $\mathcal{W}_{\alpha,\beta,\mu,\nu}=\{\mathcal{F}_1,\mathcal{F}_2,{\mathcal{F}}_{\alpha,\beta,\mu,\nu}\}$ defines a 3 web on $TM$. That is, the collection $\left\{\mathcal{W}_{\alpha,\beta,\mu,\nu} \mid \alpha\beta\mu\nu\neq 0 \right\}$ is a family of 3-webs on $TM$.
\end{proof}

\begin{corollary}
	Let $(g,F)$ be an affine para-K\"ahler structure on $M$. By
	Lemma~\ref{LAPpushforparLem}, the vector bundle isomorphism
	$$\eta^\flat:TM\longrightarrow T^*M,
		\quad
		(x,X_x)\longmapsto \bigl(x,\eta_x(X_x,\cdot)\bigr),
		\quad
		\eta\in\{g,\omega\},$$
	transfers the family of $3$-webs on $TM$ given in
	Theorem~\ref{LAPLiftTheo} to an associated family of $3$-webs on the
	$T^*M$.
\end{corollary}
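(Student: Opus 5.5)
The plan is to treat $\eta^\flat$ as a diffeomorphism $\Phi:TM\to T^*M$ and push everything forward along it, using Lemma~\ref{LAPpushforparLem} with $N=T^*M$. First I will check that $\Phi=\eta^\flat$, for $\eta\in\{g,\omega\}$, is a smooth fibre-preserving diffeomorphism covering $\operatorname{Id}_M$. Smoothness is clear. Nondegeneracy of $\eta$ makes each $\eta_x(\,\cdot\,,\cdot):T_xM\to T^*_xM$ a linear isomorphism, and the inverse $\eta^\sharp$ is smooth. In the adapted affine chart $(p^i,q^i)$ of Remark~\ref{LAP_Aff-Struc_Rem}, $\Phi$ is the constant linear map $(x,y)\mapsto(x,(\eta)^t y)$, where $(\eta)$ is one of the constant matrices in \eqref{LAP_w-g-F_Cano}. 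So $\Phi$ is even affine in the induced coordinates, which will simplify the later computations.

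Next I will transport the 3-webs. Fix $\alpha\beta\mu\nu\neq0$ and take the 3-web $\mathcal{W}_{\alpha,\beta,\mu,\nu}=\{\mathcal{F}_1,\mathcal{F}_2,\mathcal{F}_{\alpha,\beta,\mu,\nu}\}$ from Theorem~\ref{LAPLiftTheo}. I define $\Phi_*\mathcal{W}_{\alpha,\beta,\mu,\nu}:=\{\Phi_*\mathcal{F}_1,\Phi_*\mathcal{F}_2,\Phi_*\mathcal{F}_{\alpha,\beta,\mu,\nu}\}$. Three facts are needed, and each holds because $\Phi_*$ is a fibrewise linear isomorphism of tangent bundles commuting with brackets, $\Phi_*[X,Y]=[\Phi_*X,\Phi_*Y]$:
\begin{enumerate}
\item ranks are preserved;
\item involutivity is preserved, so each image is again a foliation;
\item pairwise transversality $T_zTM=\mathcal{F}_i\oplus\mathcal{F}_j$ is carried to $T_{\Phi(z)}T^*M=\Phi_*\mathcal{F}_i\oplus\Phi_*\mathcal{F}_j$.
\end{enumerate}
Hence each $\Phi_*\mathcal{W}_{\alpha,\beta,\mu,\nu}$ is a 3-web on $T^*M$. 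Distinct parameters give distinct webs, since $\Phi_*$ is injective on distributions.

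I also want the para-K\"ahler data underlying the webs to transfer, so that the corollary is genuinely an instance of Lemma~\ref{LAPpushforparLem}. For each para-K\"ahler pair $(g^{ab}_{\alpha,\beta,\mu,\nu},F^{ab}_{\alpha,\beta,\mu,\nu})$ on $TM$, the Lemma applied to $\psi=\Phi:TM\to T^*M$ gives the para-K\"ahler pair $\bigl((\Phi^{-1})^*g^{ab}_{\alpha,\beta,\mu,\nu},\ \Phi_*\circ F^{ab}_{\alpha,\beta,\mu,\nu}\circ\Phi_*^{-1}\bigr)$ on $T^*M$. This pair is affine if and only if the original is. Its eigendistributions are exactly the pushed-forward leaves $\Phi_*(F^\pm)^{a}$, so the web on $T^*M$ is induced by para-K\"ahler structures in the same way as on $TM$.

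The main obstacle is only formal: the Lemma is phrased for a diffeomorphism from a para-K\"ahler manifold $M$ to a manifold $N$, and here it is applied to $M\rightsquigarrow TM$ and $N\rightsquigarrow T^*M$. I expect this to be fine, since the proof of the Lemma never uses anything specific to $M$. Optionally, I would make the transferred webs explicit in cotangent coordinates $(p,q,\xi)$. Because $(\eta)$ is a constant block matrix swapping the $F^+$ and $F^-$ blocks (up to sign), $\partial_{y^i}$ maps to $\pm\partial_{\xi_{n+i}}$ and $\partial_{y^{n+i}}$ to $\pm\partial_{\xi_i}$, while $\partial_{p^i},\partial_{q^i}$ are fixed. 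So the images of \eqref{LAP_F-et-F-ab} and \eqref{LAP_F-et-F-ab-v(h)} are coordinate distributions of the same shape as \eqref{LAP_F-N}, which confirms transversality directly.
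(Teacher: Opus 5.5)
Your proposal is correct and follows the paper's route. The paper simply invokes Lemma~\ref{LAPpushforparLem} for the diffeomorphism $\eta^\flat:TM\to T^*M$, and your checks that push-forward preserves rank, involutivity and pairwise transversality, together with the constant-coefficient form of $\eta^\flat$ in the adapted chart, just spell out that one-line citation. One side remark is false, though it plays no role in the argument: distinct parameters do not always give distinct webs, because rescaling $(\alpha,\beta)$ or $(\mu,\nu)$ leaves $\mathcal{F}_{\alpha,\beta,\mu,\nu}$ unchanged; what is true is only that distinct webs map to distinct webs.
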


\begin{corollary}\label{LAPWithtanCoLiftRem}
Let $(g,F)$ be a para-K\"ahler structure on $M$. We denote by
$(g^{\pi},F^{\pi})$ and
$(g^{{}^*\hspace{-0.1cm}\pi},F^{{}^*\hspace{-0.1cm}\pi})$
the corresponding lifted structures on $TM$ and $T^*M$, respectively.

As a consequence of Lemma~\ref{LAPpushforparLem},
Theorem~\ref{LAPactionTheo}, and Theorem~\ref{LAPLiftTheo}, the following
assertions hold.

\begin{enumerate}
	\item The pairs
	$$
	\left(
	(\eta^\sharp)^*g^\pi,
	(F^\pi)^{\eta^\flat}
	\right)
	\quad\text{and}\quad
	\left(
	(\eta^\flat)^*g^{{}^*\hspace{-0.1cm}\pi},
	\left(F^{{}^*\hspace{-0.1cm}\pi}\right)^{\eta^\sharp}
	\right)
	$$
	define para-K\"ahler structures on $T^*M$ and $TM$, respectively.
	
	Moreover, the pairs
	$$
	\left(
	g^\pi\oplus(\eta^\sharp)^*g^\pi,
	F^\pi\oplus(F^\pi)^{\eta^\flat}
	\right)
	$$
	and
	$$
	\left(
	(\eta^\flat)^*g^{{}^*\hspace{-0.1cm}\pi}
	\oplus g^{{}^*\hspace{-0.1cm}\pi},
	\left(F^{{}^*\hspace{-0.1cm}\pi}\right)^{\eta^\sharp}
	\oplus F^{{}^*\hspace{-0.1cm}\pi}
	\right)
	$$
	define para-K\"ahler structures on the Whitney sum
	$$
	E=TM\oplus T^*M.
	$$
	
	If
	$$
	(g^{{}^*\hspace{-0.1cm}\pi},F^{{}^*\hspace{-0.1cm}\pi})
	=
	(N^{*t}g,N^{*t}F),
	\quad
	N^{*t}g=(d\theta,N^{*t}F),
	$$
	then the corresponding lifted structures are affine. On the other hand, if
	$$
	(g^{{}^*\hspace{-0.1cm}\pi},F^{{}^*\hspace{-0.1cm}\pi})
	=
	(\widetilde{g}_N,N^{*t}F),
	\quad
	\widetilde{g}_N=({}^*\hspace{-0.1cm}\pi^*\omega+d\theta,N^{*t}F),
	$$
	then the structures
	$$
	\left(
	(\eta^\flat)^*g^{{}^*\hspace{-0.1cm}\pi},
	\left(F^{{}^*\hspace{-0.1cm}\pi}\right)^{\eta^\sharp}
	\right)
	$$
	and
	$$
	\left(
	(\eta^\flat)^*g^{{}^*\hspace{-0.1cm}\pi}
	\oplus g^{{}^*\hspace{-0.1cm}\pi},
	\left(F^{{}^*\hspace{-0.1cm}\pi}\right)^{\eta^\sharp}
	\oplus F^{{}^*\hspace{-0.1cm}\pi}
	\right)
	$$
	are affine if and only if
	$(g,F)$
	is affine.
	
	\item Let $\psi\in\operatorname{Diff}(M)$. We define
	$$
	\widetilde{L}_1(\psi)=\widehat{\psi},
	\quad
	\widetilde{L}_2(\psi)
	=
	\eta^\sharp\circ\widehat{\psi}\circ\eta^\flat.
	$$
	Moreover, we set
	$$
	\widetilde{L}_1(M,g,F)
	=
	\left(
	T^*M,
	g^{{}^*\hspace{-0.1cm}\pi},
	F^{{}^*\hspace{-0.1cm}\pi}
	\right),
	$$
	and
	$$
	\widetilde{L}_2(M,g,F)
	=
	\left(
	TM,
	g^\pi,
	F^\pi
	\right).
	$$
	
	A lift of the action $\mathcal{A}$ defined in
	\eqref{LAPactMap} of Theorem~\ref{LAPactionTheo} to $T^*M$ is given by
	$$
	\mathcal{A}_{T^*M}:
	\widetilde{L}_1(\operatorname{Diff}(M))
	\times
	\widetilde{L}_1(\Pa k)
	\longrightarrow
	\widetilde{L}_1(\Pa k),
	$$
	$$
	\mathcal{A}_{T^*M}
	\left(
	\widetilde{L}_1(\psi),
	(g^{{}^*\hspace{-0.1cm}\pi},
	F^{{}^*\hspace{-0.1cm}\pi})
	\right)
	=
	\left(
	\left(\widetilde{L}_1(\psi)^{-1}\right)^*
	g^{{}^*\hspace{-0.1cm}\pi},
	\left(F^{{}^*\hspace{-0.1cm}\pi}\right)^{\widetilde{L}_1(\psi)}
	\right).
	$$
	
	Similarly, the lifted action on $TM$ is defined by
	$$
	\mathcal{A}_{TM}:
	\widetilde{L}_2(\operatorname{Diff}(M))
	\times
	\widetilde{L}_2(\Pa k)
	\longrightarrow
	\widetilde{L}_2(\Pa k),
	$$
	$$
	\mathcal{A}_{TM}
	\left(
	\widetilde{L}_2(\psi),
	(g^\pi,F^\pi)
	\right)
	=
	\left(
	\left(\widetilde{L}_2(\psi)^{-1}\right)^*g^\pi,
	(F^\pi)^{\widetilde{L}_2(\psi)}
	\right).
	$$
	
	On the Whitney sum $E=TM\oplus T^*M,$
	the induced action is the componentwise action $\mathcal{A}_{E}	=
	\mathcal{A}_{TM}\otimes\mathcal{A}_{T^*M}.$
	
	For each $i\in\{1,2\}$, the action $\mathcal{A}$ and the lift
	$\widetilde{L}_i$ commute. More precisely,
	$$
	\mathcal{A}_i
	\left(
	\widetilde{L}_i(\psi),
	\widetilde{L}_i(M,g,F)
	\right)
	=
	\widetilde{L}_i
	\left(
	\mathcal{A}(\psi,(M,g,F))
	\right),
	$$
	for every $\psi\in\operatorname{Diff}(M)$ and every
	$(g,F)\in\Pa k$, $
	\mathcal{A}_1=\mathcal{A}_{T^*M}$, $\mathcal{A}_2=\mathcal{A}_{TM}.
	$
\end{enumerate}
\end{corollary}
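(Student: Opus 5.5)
The strategy is to treat every assertion of Corollary~\ref{LAPWithtanCoLiftRem} as an instance of Lemma~\ref{LAPpushforparLem}, which holds for an arbitrary diffeomorphism $\psi:M\to N$ between two manifolds. The maps we apply it to are the bundle isomorphisms $\eta^\flat:TM\to T^*M$ and $\eta^\sharp:T^*M\to TM$, which are diffeomorphisms of total spaces since $\eta\in\{g,\omega\}$ is non-degenerate. The naturality statements then reduce to functoriality of push-forward, $(\psi\circ\varphi)_*=\psi_*\circ\varphi_*$ (Remark~\ref{act}).

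\emph{Part 1.} Theorem~\ref{LAPLiftTheo} supplies para-K\"ahler structures $(g^\pi,F^\pi)$ on $TM$ and $(g^{{}^*\hspace{-0.1cm}\pi},F^{{}^*\hspace{-0.1cm}\pi})$ on $T^*M$.
\begin{enumerate}
\item Applying Lemma~\ref{LAPpushforparLem} with $\psi=\eta^\flat$, and using $(\eta^\flat)^{-1}=\eta^\sharp$, shows that $\bigl((\eta^\sharp)^*g^\pi,(F^\pi)^{\eta^\flat}\bigr)$ is para-K\"ahler on $T^*M$. Symmetrically, $\psi=\eta^\sharp$ gives the structure on $TM$.
\item The same lemma says affineness is preserved in both directions. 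So affineness of $(N^{*t}g,N^{*t}F)$ (item 1 of Theorem~\ref{LAPLiftTheo}) transfers to the transported structure.
\item For $(\widetilde g_N,N^{*t}F)$, the transported structure is affine if and only if $(\widetilde g_N,N^{*t}F)$ is, which by item 2 of Theorem~\ref{LAPLiftTheo} holds if and only if $(g,F)$ is affine.
\end{enumerate}

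\emph{Whitney sum.} Here I would interpret $E$ as the product of the total spaces, with metric $g_1\oplus g_2$ and endomorphism $F_1\oplus F_2$. The verification then has three short steps:
\begin{enumerate}
\item The Levi--Civita connection of a product metric is the product connection, so $\nabla(F_1\oplus F_2)=\nabla^{g_1}F_1\oplus\nabla^{g_2}F_2=0$.
\item Anti-compatibility $g(FX,FY)=-g(X,Y)$ holds componentwise, and the $\pm1$ eigenbundles have equal rank because each factor's do.
\item The curvature of a product is the direct sum of the factor curvatures, so the product is affine exactly when both factors are. This yields the stated affineness equivalences.
\end{enumerate}

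\emph{Part 2.} First I will check that $\widetilde L_1$ and $\widetilde L_2$ are group homomorphisms $\operatorname{Diff}(M)\to\operatorname{Diff}(T^*M)$ and $\operatorname{Diff}(M)\to\operatorname{Diff}(TM)$.
\begin{itemize}
\item For $\widetilde L_1$, Proposition~\ref{prop:cotangent_lift_symplectomorphism} gives $\widehat{\psi\circ\varphi}=\widehat\psi\circ\widehat\varphi$.
\item For $\widetilde L_2$, the identity $\eta^\sharp\widehat\psi\eta^\flat\,\eta^\sharp\widehat\varphi\eta^\flat=\eta^\sharp\widehat{\psi\varphi}\eta^\flat$ follows by cancelling $\eta^\flat\eta^\sharp=\operatorname{Id}$.
\end{itemize}
Since $\mathcal A_{T^*M}$ and $\mathcal A_{TM}$ are exactly the action of Theorem~\ref{LAPactionTheo} on the manifolds $T^*M$ and $TM$, restricted to these subgroups, they are left actions preserving para-K\"ahler and affine structures. $\mathcal A_E$ acts componentwise, so it inherits these properties from the factors and from the product argument above.

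\emph{Commutation relation.} This step is the main obstacle. One must show that the lift of $\bigl((\psi^{-1})^*g,F^\psi\bigr)$ equals the push-forward by $\widehat\psi$ of the lift of $(g,F)$. My approach is to establish naturality of each building block under $\widehat\psi$.
\begin{enumerate}
\item The tautological form is preserved, $\widehat\psi^*\theta=\theta$, hence so is $d\theta$.
\item Pullbacks intertwine: ${}^*\hspace{-0.1cm}\pi\circ\widehat\psi=\psi\circ{}^*\hspace{-0.1cm}\pi$, so $(\widehat\psi^{-1})^*({}^*\hspace{-0.1cm}\pi^*\omega)={}^*\hspace{-0.1cm}\pi^*\bigl((\psi^{-1})^*\omega\bigr)$.
\item The conormal construction is natural: $\widehat\psi_*\bigl(N^{*t}F^\pm\bigr)=N^{*t}\bigl(\psi_*F^\pm\bigr)$. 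I would check this in the adapted cotangent coordinates of \eqref{LAP_F-N}, since $\widehat\psi$ is the induced coordinate change.
\end{enumerate}
On $TM$ the argument is analogous, conjugating by $\eta^\flat$. The delicate point is that for $\widetilde L_2$ one needs $\eta^\flat$ to intertwine $\psi$ with the corresponding transported $\eta$. This is automatic when one uses $(\psi^{-1})^*\eta$ on the image structure, which is precisely what $\mathcal A$ produces. For the complete, vertical and horizontal lifts I would invoke their naturality, $(\psi_*X)^a=(T\psi)_*X^a$, where for $a=h$ this uses Lemma~\ref{LAPpushforparLem}, i.e.\ $\nabla^{(\psi^{-1})^*g}=(\nabla^g)^\psi$.
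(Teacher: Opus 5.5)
Your Part 1 follows the paper's implicit route: apply Lemma~\ref{LAPpushforparLem} to the total-space diffeomorphisms $\eta^\flat$ and $\eta^\sharp$, and read off affineness from Theorem~\ref{LAPLiftTheo}. Your naturality argument for $i=1$ is also fine; it uses invariance of $\theta$, the relation $\pi\circ\widehat{\psi}=\psi\circ\pi$ on $T^*M$, and induced charts. \textbf{The gap is the commutation relation for $i=2$, and in fact that relation is false as stated.}

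Complete, vertical and horizontal lifts are natural with respect to the tangent lift $T\psi$. The intertwining you invoke reads $\bigl((\psi^{-1})^*\eta\bigr)^\flat\circ T\psi=\widehat{\psi}\circ\eta^\flat$, i.e.\ $T\psi=\bigl((\psi^{-1})^*\eta\bigr)^\sharp\circ\widehat{\psi}\circ\eta^\flat$. But $\widetilde{L}_2(\psi)=\eta^\sharp\circ\widehat{\psi}\circ\eta^\flat$ uses the same fixed $\eta$ on both sides, and your homomorphism check relies on exactly that. Hence
$\widetilde{L}_2(\psi)=\eta^\sharp\circ\bigl((\psi^{-1})^*\eta\bigr)^\flat\circ T\psi$.
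The extra fibrewise-linear factor is the identity only when $\psi^*\eta=\eta$, and in general it is not an automorphism of the lifted structure. So the step you call ``automatic'' is exactly where the argument breaks.

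Here is a concrete counterexample. Take the data of Example~\ref{LAP_Example}, with $\eta=\omega$ or $\eta=g$, $ab=cv$, and $\psi(x,y)=(2x,y)$.
\begin{itemize}
\item On the base, $\mathcal{A}(\psi,(g,F))=(\tfrac12 g,F)$.
\item Its $cv$-lift is $(\tfrac12 g^{cv},F^{cv})$, where $g^{cv}=-(dx\otimes dw+dw\otimes dx+dy\otimes du+du\otimes dy)$.
\item On the other hand, $\widetilde{L}_2(\psi)(x,y,u,w)=(2x,y,u,w/2)$, which preserves both $g^{cv}$ and $F^{cv}$.
\item So the left-hand side of the commutation relation is $(g^{cv},F^{cv})$, and the two sides differ by a factor $2$ in the metric.
\end{itemize}
What your argument actually proves is the relation with $T\psi$ in place of $\widetilde{L}_2(\psi)$. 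Equivalently, it proves the stated relation only for $\psi$ with $\psi^*\eta=\eta$, i.e.\ symplectomorphisms or isometries. Alternatively, if $(g^\pi,F^\pi)$ is defined as the transport of the cotangent lift by the original, fixed $\eta^\sharp$, then $i=2$ follows formally from $i=1$. However, that is not the $(g^{ab},F^{ab})$ of Theorem~\ref{LAPLiftTheo} that you are working with.

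Two smaller points.

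First, your reading of $E$ as the product $TM\times T^*M$ is not the Whitney sum. The Whitney sum's total space is the fibre product $TM\times_M T^*M$, of dimension $6n$. There the $4n$-dimensional vertical subspace is isotropic for the summed metric, because every lifted metric here vanishes on pairs of vertical vectors. Since an isotropic subspace of a nondegenerate form has dimension at most $3n$, that metric is degenerate. Your product reading is therefore the only one under which the claim holds, and you should state explicitly that you are changing the statement.

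Second, Theorem~\ref{LAPLiftTheo} only provides $(g^{ab},F^{ab})$ on $TM$ when $(g,F)$ is affine. Every assertion involving $g^\pi$ therefore needs that hypothesis, which your Part 1 omits.
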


%




\begin{example}\label{LAP_Example}
Let $M=\mathbb{R}^2$, and let $(x,y)$, $(x,y,u,w)$, and $(x,y,s,t)$ denote the standard coordinates on $M$, $TM\approx\mathbb{R}^4$, and $T^*M\approx\mathbb{R}^4$, respectively. Consider
\begin{equation*}
	\omega=dy\wedge dx,\quad
	g=-(dx\otimes dy+dy\otimes dx),
\end{equation*}
together with the paracomplex structure defined by
\begin{equation*}
	F^+=\left\langle \frac{\partial}{\partial x}\right\rangle,
	\quad
	F^-=\left\langle \frac{\partial}{\partial y}\right\rangle.
\end{equation*}
Observe that the pair $(g,F)$ is para-K\"ahler structure on $\mathbb{R}^2$.

Let $\theta=s\,dx+t\,dy$ be the canonical Liouville one-form on $T^*M$. The relevant lifted forms are given by
\begin{equation}\label{LAP_lift_Omega}
	d\theta=ds\wedge dx+dt\wedge dy,\quad
	\pi^*\omega=dy\wedge dx,\quad
	\left(\omega^\flat\right)^*(d\theta)=dw\wedge dx-du\wedge dy.
\end{equation}
Accordingly, the lifted metrics are
\begin{equation*}
	N^{*t}g=-
	\left(
	dx\otimes ds+dy\otimes dt+ds\otimes dx+dt\otimes dy
	\right),
\end{equation*}
and
\begin{equation*}
	\widetilde{g}_N=-
	\left(
	dx\otimes dy+dy\otimes dx
	+dx\otimes ds+dy\otimes dt
	+ds\otimes dx+dt\otimes dy
	\right).
\end{equation*}

The corresponding lifted paracomplex structures are defined by
\begin{equation}\label{LAP_lift_F}
	\begin{array}{cc}
		N^{*t}F^+=
		\left\langle 
		\frac{\partial}{\partial x},
		\frac{\partial}{\partial t}
		\right\rangle,
		&
		N^{*t}F^-=
		\left\langle 
		\frac{\partial}{\partial y},
		\frac{\partial}{\partial s}
		\right\rangle,
		\\[0.4cm]
		\left(F^+\right)^v=
		\left\langle 
		\frac{\partial}{\partial u}
		\right\rangle,
		&
		\left(F^-\right)^v=
		\left\langle 
		\frac{\partial}{\partial w}
		\right\rangle,
		\\[0.4cm]
		\left(F^+\right)^h=
		\left\langle 
		\frac{\partial}{\partial x}
		\right\rangle,
		&
		\left(F^-\right)^h=
		\left\langle 
		\frac{\partial}{\partial y}
		\right\rangle,
		\\[0.4cm]
		\left(F^+\right)^c=
		\left\langle 
		\frac{\partial}{\partial x}
		\right\rangle,
		&
		\left(F^-\right)^c=
		\left\langle 
		\frac{\partial}{\partial y}
		\right\rangle,
		\\[0.4cm]
		\left(F^+_{1,6}\right)^{v(h)}=
		\left\langle \frac{\partial}{\partial x}+
		6\frac{\partial}{\partial u}
		\right\rangle,
		&
		\left(F^-_{1,1}\right)^{v(h)}=
		\left\langle \frac{\partial}{\partial y}+
		\frac{\partial}{\partial w}
		\right\rangle,
	\end{array}
\end{equation}
where the horizontal lift is taken with respect to the Levi-Civita
connection $\nabla^g$.

In order to obtain a simple finite-dimensional illustration, we restrict the action $\A$ to the special linear group
\begin{equation*}
	\operatorname{SL}_2(\mathbb{R})
	=
	\left\{
	\psi_A:\mathbb{R}^2\longrightarrow \mathbb{R}^2,\quad
	\psi_A(x,y)=A(x,y)^t,\quad
	A\in M_2(\mathbb{R}),\quad
	\det A=1
	\right\}.
\end{equation*}
Let
\begin{equation*}
	A=
	\begin{pmatrix}
		\alpha & \beta\\
		\gamma & \delta
	\end{pmatrix},
	\quad
	\alpha\delta-\beta\gamma=1,
	\quad
	\psi=\psi_A.
\end{equation*}
Since $\det A:=\alpha\delta-\beta\gamma=1$, every element of $\operatorname{SL}_2(\mathbb{R})$ preserves the symplectic form $\omega$, and preserve $g$ if and only if $\alpha\gamma=0=\delta\beta$.

The induced action on para-K\"{a}hler structures is given by
\begin{equation}\label{LAP_lift_Act_Ex}
	g^\psi=(\psi^{-1})^*g,\quad
	\left(F^\psi\right)^+=
	\left\langle
	\alpha\frac{\partial}{\partial x}
	+
	\gamma\frac{\partial}{\partial y}
	\right\rangle,
	\quad
	\left(F^\psi\right)^-=
	\left\langle
	\beta\frac{\partial}{\partial x}
	+
	\delta\frac{\partial}{\partial y}
	\right\rangle.
\end{equation}
Observe that,
\begin{equation*}
	\psi^{-1}(x,y)
	=
	A^{-1}(x,y)^t
	=
	(\delta x-\beta y,\,-\gamma x+\alpha y).
\end{equation*}

Then the cotangent lift of $\psi$ is
\begin{equation}\label{LAP_lift_psi}
	\widehat{\psi}(x,y,s,t)
	=
	\left(
	\alpha x+\beta y,\,
	\gamma x+\delta y,\,
	\delta s-\gamma t,\,
	-\beta s+\alpha t
	\right).
\end{equation}
Its differential is represented by the block matrix
\begin{equation*}
	\widehat{\psi}_*
	=
	\begin{pmatrix}
		A & 0\\
		0 & A^{-1}
	\end{pmatrix}.
\end{equation*}
Since $\widehat{\psi}$ is the cotangent lift of $\psi$, it preserves the canonical one-form $\theta$, and hence it preserves the canonical symplectic form $d\theta$. Moreover, the cotangent lift acts naturally on the base and dual variables, so it also preserves the canonical neutral metric $N^{*t}g$. Therefore,
\begin{equation*}
	\widehat{\psi}^{\,*}d\theta=d\theta,
	\quad
	\widehat{\psi}^{\,*}N^{*t}g=N^{*t}g.
\end{equation*}
Combining \eqref{LAP_lift_F}, \eqref{LAP_lift_Omega}, \eqref{LAP_lift_Act_Ex}, and \eqref{LAP_lift_psi}, the lifted action $\mathcal{A}|_{\operatorname{SL}_2}$ may be described in the different equivalent ways appearing in Corollary~\ref{LAPWithtanCoLiftRem}. For instance,
\begin{equation*}
	\widehat{\psi}^{\,*}N^{*t}g=N^{*t}g,
\end{equation*}
and
\begin{equation*}
	\left(\left(N^{*t}F\right)^{\widehat{\psi}}\right)^+
	=
	\left\langle
	\widehat{\psi}_*
	\frac{\partial}{\partial x},
	\widehat{\psi}_*
	\frac{\partial}{\partial t}
	\right\rangle,
	\quad
	\left(\left(N^{*t}F\right)^{\widehat{\psi}}\right)^-
	=
	\left\langle
	\widehat{\psi}_*
	\frac{\partial}{\partial y},
	\widehat{\psi}_*
	\frac{\partial}{\partial s}
	\right\rangle.
\end{equation*}
Equivalently,
\begin{equation*}
	\left(\left(N^{*t}F\right)^{\widehat{\psi}}\right)^+
	=
	\left\langle
	\alpha\frac{\partial}{\partial x}
	+
	\gamma\frac{\partial}{\partial y},
	-\gamma\frac{\partial}{\partial s}
	+
	\alpha\frac{\partial}{\partial t}
	\right\rangle,
\end{equation*}
and
\begin{equation*}
	\left(\left(N^{*t}F\right)^{\widehat{\psi}}\right)^-
	=
	\left\langle
	\beta\frac{\partial}{\partial x}
	+
	\delta\frac{\partial}{\partial y},
	\delta\frac{\partial}{\partial s}
	-
	\beta\frac{\partial}{\partial t}
	\right\rangle.
\end{equation*}
\end{example}

These two-dimensional examples illustrate the lifting procedure in the simplest
nontrivial setting and provide useful models for understanding the corresponding
constructions in higher dimensions and in more general geometric contexts.



\small

\begin{thebibliography}{99}
	\bibitem{Dom} P. Dombrowski.
	On the Geometry of the Tangent Bundle.
	{\it Journal f\"ur die reine und angewandte Mathematik} \textbf{210} (1962) 73--88.
	
	\bibitem{1} F. Etayo, R. Santamaria and U. Tr\'{\i}as.
	The geometry of a bi-Lagrangian manifold.
	{\it Differential Geometry and Applications} \textbf{24}\,(1) (2006) 33--59.
	
	\bibitem{FAT} A. Ferrag, A. Aoufi and M. Tadjine.
	Existence and Uniqueness of Solution for Stochastic Nonlocal Random Functional Integral Equation.
	{\it Nonlinear Dynamics and Systems Theory} \textbf{25}\,(1) (2025) 53--58.
	

	\bibitem{Hess} H. Hess.
	Connections on symplectic manifolds and geometric quantization.
	In: {\it Differential Geometric Methods in Mathematical Physics}.
	(P.\,L. Garc{\'{\i}}a, A. P{\'e}rez-Rend{\'o}n and J.\,M. Souriau).
	Springer-Verlag, Berlin, 1980, 153--166.
	
	\bibitem{TNB2} B. Tangue Ndawa.
	Infinite Lifting of an Action of Symplectomorphism Group on the set of bi-Lagrangian structures.
	{\it Journal of Geometry and Mechanics} \textbf{14}\,(3) (2022) 409--426.
		\bibitem{TNB4} B. Tangue Ndawa.
	Dynamics on Bi-Lagrangian Structures and Cherry maps.
	{\it Arxiv}   	arXiv:2508.12350.
	\bibitem{TNB5} B. Tangue Ndawa, F. Ngakeu and N. Saipele Nansidi.
	Bi-Lagrangian and transverse Dirac structures induced on the Whitney sum.
	{\it Arxiv}  	arXiv:2607.11804.
	
	\bibitem{YKI} K. Yano and S. Kobayashi.
	Prolongations of tensor fields and connections to tangent bundles I.
	{\it Journal of the Mathematical Society of Japan} \textbf{18}\,(2) (1966) 195--210.
	
	\bibitem{YKII} K. Yano and S. Kobayashi.
	Prolongations of tensor fields and connections to tangent bundles II.
	{\it Journal of the Mathematical Society of Japan} \textbf{18}\,(3) (1966) 236--246.
	
\end{thebibliography}
\end{document}